\newtheorem{theorem}{Theorem}
\newtheorem{lemma}[theorem]{Lemma}
\newtheorem{proposition}[theorem]{Proposition}
\newtheorem{corollary}[theorem]{Corollary}
\theoremstyle{definition}
\newtheorem{definition}[theorem]{Definition}
\newtheorem{example}[theorem]{Example}
\theoremstyle{remark}
\newtheorem{remark}[theorem]{Remark}
\numberwithin{equation}{section}
\begin{document}

\newcommand{\Diff}{\operatorname{Diff}}
\newcommand{\Homeo}{\operatorname{Homeo}}
\newcommand{\Hom}{\operatorname{Hom}}
\newcommand{\Exp}{\operatorname{Exp}}
\newcommand{\Orb}{\operatorname{\textup{Orb}}}

\newcommand{\COrb}{\operatorname{\star\textup{Orb}}}
\newcommand{\GenericOrb}{\operatorname{\{\cdot\}\textup{Orb}}}
\newcommand{\CROrb}{\operatorname{\scriptscriptstyle{\blacklozenge}\scriptstyle\textup{Orb}}}
\newcommand{\ssslozenge}{\scriptscriptstyle{\blacklozenge}}
\newcommand{\ssstriangledown}{\scriptscriptstyle{\blacktriangledown}}

\newcommand{\Stwo}{\mbox{$\displaystyle S^2$}}
\newcommand{\Sn}{\mbox{$\displaystyle S^n$}}
\newcommand{\supp}{\operatorname{supp}}
\newcommand{\intr}{\operatorname{int}}
\newcommand{\kernel}{\operatorname{ker}} \newcommand{\A}{\mathbb{A}}
\newcommand{\B}{\mathbb{B}} \newcommand{\C}{\mathbb{C}}
\newcommand{\D}{\mathbb{D}} \newcommand{\E}{\mathbb{E}}
\newcommand{\F}{\mathbb{F}} \newcommand{\G}{\mathbb{G}}
\newcommand{\Hh}{\mathbb{H}} \newcommand{\I}{\mathbb{I}}
\newcommand{\J}{\mathbb{J}} \newcommand{\K}{\mathbb{K}}
\newcommand{\Ll}{\mathbb{L}} \newcommand{\M}{\mathbb{M}}
\newcommand{\N}{\mathbb{N}} \newcommand{\Oo}{\mathbb{O}}
\newcommand{\Pp}{\mathbb{P}} \newcommand{\Q}{\mathbb{Q}}
\newcommand{\R}{\mathbb{R}} \newcommand{\Ss}{\mathbb{S}}
\newcommand{\T}{\mathbb{T}} \newcommand{\U}{\mathbb{U}}
\newcommand{\V}{\mathbb{V}} \newcommand{\W}{\mathbb{W}}
\newcommand{\X}{\mathbb{X}} \newcommand{\Y}{\mathbb{Y}}
\newcommand{\Z}{\mathbb{Z}} \newcommand{\kk}{\mathbb{k}}
\newcommand{\orbify}[1]{\ensuremath{\mathcal{#1}}}
\newcommand{\starfunc}[1]{\ensuremath{{}_\star{#1}}}
\newcommand{\lozengefunc}[1]{\ensuremath{{}_{\scriptscriptstyle{\blacklozenge}}{#1}}}
\newcommand{\redfunc}[1]{\ensuremath{{}_\bullet{#1}}}

\newcommand{\OrbDiff}{\ensuremath{\Diff_{\Orb}}}
\newcommand{\RedOrbDiff}{\ensuremath{\Diff_{\textup{red}}}}
\newcommand{\COrbDiff}{\ensuremath{\Diff_{\COrb}}}
\newcommand{\CROrbDiff}{\ensuremath{\Diff_{\CROrb}}}
\newcommand{\OrbMaps}{\ensuremath{C_{\Orb}}}
\newcommand{\RedOrbMaps}{\ensuremath{C_{\textup{red}}}}
\newcommand{\COrbMaps}{\ensuremath{C_{\COrb}}}
\newcommand{\CROrbMaps}{\ensuremath{C_{\CROrb}}}
\newcommand{\OrbMapsGeneric}{\ensuremath{C_{\GenericOrb}}}
\newcommand{\Frechet}{Fr\'{e}chet\ }
\newcommand{\Frechetnospace}{Fr\'{e}chet}

\title[Orbifold Differential Topology]{Elementary Orbifold Differential Topology}

\author{Joseph E. Borzellino}
\address{Department of Mathematics, California Polytechnic State
  University, 1 Grand Avenue, San Luis Obispo, California 93407}
\email{jborzell@calpoly.edu}

\author{Victor Brunsden} \address{Department of Mathematics and
  Statistics, Penn State Altoona, 3000 Ivyside Park, Altoona,
  Pennsylvania 16601} \email{vwb2@psu.edu}

\subjclass[2010]{Primary 57R18; Secondary 57R35, 57R45}

\date{\today} \commby{Editor} \keywords{orbifolds, differential topology of orbifolds, regular values, Sard's theorem, retraction}
\begin{abstract}
Taking an elementary and straightforward approach, we develop the concept of a regular value for a smooth map $f:\orbify{O}\to\orbify{P}$ between smooth orbifolds $\orbify{O}$ and $\orbify{P}$. We show that Sard's theorem holds and that the inverse image of a regular value is a smooth full suborbifold of $\orbify{O}$. We also study some constraints that the existence of a smooth orbifold map imposes on local isotropy groups. As an application, we prove a Borsuk no retraction theorem for compact orbifolds with boundary and some obstructions to the existence of real-valued orbifold maps from local model orbifold charts. 
\end{abstract}

\maketitle

\section{Introduction}\label{IntroSection}

Inspired by the elementary and elegant treatment of differential topology found in J. Milnor's book \cite{MR1487640}, \emph{Topology from a differentiable viewpoint}, we generalize some of the fundamental material of that book to the category of smooth orbifolds in a manner that is elementary. 

\section{Smooth Orbifolds}\label{OrbifoldsSection}

Although there are many references for this background material, we will use our previous work \cites{MR2523149,BB_Stratified} as our standard reference. While much of what we discuss here works equally well for smooth $C^r$ orbifolds, to simplify the exposition, we restrict ourselves to smooth $C^\infty$ orbifolds. Throughout, the term \emph{smooth} means $C^\infty$. This results in no loss of generality \citelist{\cite{MR2523149}*{Proposition~3.11} \cite{Kankaanrinta}}. Note that the classical definition of orbifold given below is modeled on the definition in Thurston \cite{Thurston78} and that these orbifolds are referred to as {\em classical effective orbifolds} in \cite{MR2359514}.

\begin{definition}\label{orbifold}
  An $n$-dimensional  \emph{smooth orbifold} $\orbify{O}$,
  consists of a paracompact, Hausdorff topological space
  $X_\orbify{O}$ called the \emph{underlying space}, with the
  following local structure.  For each $x \in X_\orbify{O}$ and
  neighborhood $U$ of $x$, there is a neighborhood $U_x \subset U$, an
  open set $\tilde U_x$ diffeomorphic to $\R^n$, a finite group $\Gamma_x$ acting
  smoothly and effectively on $\tilde U_x$ which fixes $0\in\tilde
  U_x$, and a homeomorphism $\phi_x:\tilde U_x/\Gamma_x \to U_x$ with
  $\phi_x(0)=x$.  These actions are subject to the condition that for
  a neighborhood $U_z\subset U_x$ with corresponding $\tilde U_z \cong
  \R^n$, group $\Gamma_z$ and homeomorphism $\phi_z:\tilde
  U_z/\Gamma_z \to U_z$, there is a smooth embedding $\tilde\psi_{zx}:\tilde
  U_z \to \tilde U_x$ and an injective homomorphism
  $\theta_{zx}:\Gamma_z \to \Gamma_x$ so that $\tilde\psi_{zx}$ is
  equivariant with respect to $\theta_{zx}$ (that is, for
  $\gamma\in\Gamma_z, \tilde\psi_{zx}(\gamma\cdot \tilde
  y)=\theta_{zx}(\gamma)\cdot\tilde\psi_{zx}(\tilde y)$ for all
  $\tilde y\in\tilde U_z$), such that the following diagram commutes:
  \begin{equation*}
    \xymatrix{{\tilde U_z}\ar[rr]^{\tilde\psi_{zx}}\ar[d]&&{\tilde U_x}\ar[d]\\
      {\tilde U_z/\Gamma_z}\ar[rr]^>>>>>>>>>>{\psi_{zx}=\tilde\psi_{zx}/\Gamma_z}\ar[dd]^{\phi_z}&&{\tilde U_x/\theta_{zx}(\Gamma_z)\ar[d]}\\
      &&{\tilde U_x/\Gamma_x}\ar[d]^{\phi_x}\\
      {U_z}\ar[rr]^{\subset}&&{U_x} }
  \end{equation*}
\end{definition}

We will refer to the neighborhood $U_x$ or $(\tilde U_x,\Gamma_x)$ or
$(\tilde U_x,\Gamma_x, \rho_x, \phi_x)$ as an \emph{orbifold chart}, and write $U_x=\tilde U_x/\Gamma_x$. In the 4-tuple  notation, we are making explicit the representation $\rho_x:\Gamma_x\to\text{Diff}^\infty(\tilde U_x)$.
The \emph{isotropy group of $x$} is the group $\Gamma_x$. The
definition of orbifold implies that the germ of the action of $\Gamma_x$ in a
neighborhood of the origin of $\R^n$ is unique, so that by shrinking $\tilde U_x$ if necessary, $\Gamma_x$ is well-defined up to isomorphism. The
\emph{singular set} of \orbify{O} is the set of points
$x \in \orbify{O}$ with $\Gamma_x \ne \{ e\}$.
More detail can be found in \cite{MR2523149}.

\begin{definition}\label{OrbifoldWithBdyDef} A \emph{smooth orbifold with boundary} $\orbify{X}$, is an orbifold as in definition~\ref{orbifold} where one replaces the requirement that $\tilde U_x$ be diffeomorphic to $\R^n$ with the requirement that $\tilde U_x$ be diffeomorphic to $\R^n$ or $\R^n_+$, the closed upper half-space. The boundary $\partial\orbify{X}$ of $\orbify{X}$ consists of those points $x\in\orbify{X}$ where $\tilde U_x$ is diffeomorphic to $\R^n_+$. Throughout the rest of the article, we will use $\orbify{X}$ to denote a smooth orbifold with nonempty boundary.
\end{definition}

\subsection{Compact 1-dimensional orbifolds}\label{CompactOneOrbifolds} Using the classification of compact 1-dimensional manifolds, it is easy to classify all 1-dimensional compact connected orbifolds with or without boundary. There are four types: (a) the circle $S^1$, (b) the closed interval $[0,1]$ with trivial orbifold structure, (c) the closed interval $[0,1]$ with where $\{0\}$ is a singular point with $\Z_2$ isotropy, and (d) the closed interval $[0,1]$ with where both $\{0,1\}$  have $\Z_2$ isotropy. Thus, the compact 1-orbifolds must be finite unions of orbifolds of these types. See figure~\ref{Compact1Orbifolds}.

\begin{figure}[ht]
  \centering
  \includegraphics[width=360pt]{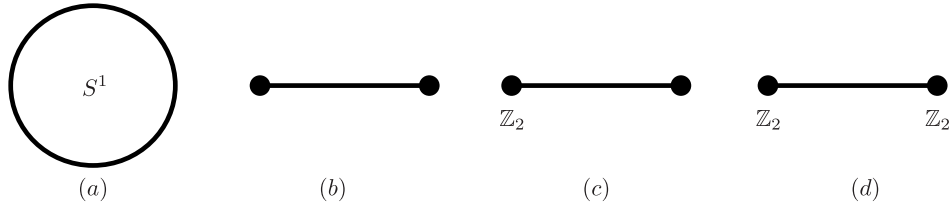}
  \caption{Compact connected 1-orbifolds}
  \label{Compact1Orbifolds}
\end{figure}

\subsection{Smooth Suborbifolds}\label{SuborbifoldSection}
The definition of suborbifold is somewhat subtle and we distinguish two types of suborbifolds.

\begin{definition}\label{SubOrbifold}
  An (embedded) \emph{suborbifold} \orbify{P} of an orbifold \orbify{O} consists
  of the following.
  \begin{enumerate}
  \item A subspace $X_{\orbify{P}}\subset X_{\orbify{O}}$ equipped
    with the subspace topology
  \item For each $x\in X_{\orbify{P}}$ and neighborhood $W$ of $x$ in
    $X_{\orbify{O}}$ there is an orbifold chart $(\tilde U_x,
    \Gamma_x, \rho_x, \phi_x)$ about $x$ in \orbify{O} with
    $U_x\subset W$, a subgroup $\Lambda_x \subset \Gamma_x$ of the
    isotropy group of $x$ in \orbify{O} and a $\rho_x(\Lambda_x)$
    invariant linear submanifold $\tilde V_x\subset \tilde U_x \cong \R^n$,
    so that $(\tilde V_x, \Lambda_x/\Omega_x,\rho_x\lvert_{\Lambda_x},\psi_x))$ is
    an orbifold chart for $\orbify{P}$ where $\Omega_x=\left\{\gamma\in\Lambda_x\mid \rho_x(\gamma)\lvert_{\tilde{V}_x}=\text{Id}\right\}$. (In particular, the \emph{intrinsic} isotropy subgroup at $x\in\orbify{P}$ is  $\Lambda_x/\Omega_x$), and
  \item
      $V_x = \psi_x(\tilde V_x/\rho_x(\Lambda_x))
      =U_x\cap X_{\orbify{P}}$
    is an orbifold chart for $x$ in $\orbify{P}$.
  \end{enumerate}

\end{definition}

\begin{remark}
  Originally, in \cite{Thurston78}, the notion of an $m$--suborbifold
  $\orbify{P}$ of an $n$--orbifold $\orbify{O}$ required
  $\orbify{P}$ to be locally modeled on $\R^m\subset\R^n$ modulo
  finite groups. That is, the local action on $\R^m$ is induced by the
  local action on $\R^n$. This is equivalent to adding the condition that $\Lambda_x = \Gamma_x$ at all $x$ in the underlying topological space of $\orbify{P}$. 
\end{remark}
Given this remark, we make the following definition:

\begin{definition}\label{FullSuborbifoldDef} $\orbify{P}\subset\orbify{O}$ is a \emph{full suborbifold} of $\orbify{O}$ if $\orbify{P}$ is a suborbifold with
$\Lambda_x=\Gamma_x$ for all $x\in\orbify{P}$.
\end{definition}

\begin{example}\label{FullSuborbifoldExample} Let $\orbify{Q}=\R/\Z_2$ be the smooth orbifold (without boundary)  where $\Z_2$ acts on $\R$ via $\gamma\cdot x=-x$. The underlying topological space $X_{\orbify{Q}}$ of $\orbify{Q}$ is $[0, \infty)$ and the isotropy subgroups are $\{1\}$ for $x\in (0, \infty)$ and $\Z_2$ for $x = 0$. Let $\orbify{O}=\orbify{Q}\times\orbify{Q}$ be the smooth product orbifold (without boundary). See \cite{MR2523149}*{Definiton~2.12}. The underlying space for $\orbify{O}$ can be identified with the closed first quadrant and the singular points of $\orbify{O}$ lie in one of three connected singular strata: the positive $x$ axis, the positive $y$ axis (corresponding to those points with $\Z_2$ isotropy), and the origin which has $\Z_2\times\Z_2$ isotropy. Then both $\orbify{Q}\times\{0\}$ and $\{0\}\times\orbify{Q}$ are full suborbifolds of $\orbify{O}$. On the other hand, the diagonal $\textup{diag}(\orbify{Q})=\{(x,x)\mid x\in\orbify{Q}\}\subset\orbify{O}$ is merely a suborbifold. See \cite{MR2523149}*{Example~2.15}.
\end{example}

\begin{example}\label{SuborbifoldExample}
  Let \orbify{O} be as in example~\ref{FullSuborbifoldExample}. Consider the circle $\orbify{S}\subset\orbify{O}$ of radius 1 centered at $(1,1)$. Then $\orbify{S}$ is a suborbifold of $\orbify{O}$ that is not a full suborbifold. To see this, just note that at the point $x=(1,0)\in\orbify{O}$ any lift of $\orbify{S}$ to $\tilde U_x\cong\R^2$ in a neighborhood of $x$, cannot be an invariant linear submanifold unless we choose $\Lambda_x=\{1\}$. In this case, we see that the intrinsic isotropy group of $\orbify{S}$ at $x$ is trivial which it must be since $\orbify{S}$ is actually a compact 1-dimensional \emph{manifold}. That is, a compact 1-dimensional orbifold with trivial orbifold structure.
\end{example}

\begin{remark} 
  Let $\orbify{P}\subset\orbify{O}$ be a suborbifold.  Note that even
  though a point $p\in X_{\orbify{P}}$ may be in the singular set of
  \orbify{O}, it need not be in the singular set of \orbify{P}.
\end{remark}

\section{Smooth Mappings Between Orbifolds}\label{OrbifoldMapNotions}

In the literature, there are four related definitions of maps between orbifolds which are based on the classical Satake-Thurston approach to orbifolds via atlases of orbifold charts. In this paper, we only need to use the notion of complete orbifold map. It is distinguished from the other notions of orbifold map in that we are going to keep track of all defining data. All other notions of orbifold map descend from the complete orbifold maps by forgetting information. It turns out that the results of this paper also follow using any of the four notions of orbifold map. This requires only an understanding on how these notions of orbifold map are related to one another. We point this out explicitly in our exposition below. More detail can be found in \cite{BB_Stratified} and in what follows we use the notation of \cite{MR2523149}*{Section 2}.

The original motivation for defining the notion of complete orbifold map was to make meaningful and well-defined certain geometric constructions involving orbifolds and their maps. The need to be careful in defining an adequate notion of orbifold map was already noted in the work of Moerdijk and Pronk \cite{MR1466622} and Chen and Ruan \cite{MR1950941} and was missing from Satake's original work on $V$-manifolds \cites{MR0079769,MR0095520}.

\subsection{Complete Orbifold Maps}\label{CompleteOrbiMapSubSection}
\begin{definition}\label{CompleteOrbiMap}  
  A $C^\infty$ \emph{complete orbifold map} $(f,\{\tilde f_x\},\{\Theta_{f,x}\})$ between
  smooth orbifolds $\orbify{O}$ and $\orbify{P}$ consists of the
  following:
  \begin{enumerate}
  \item A continuous map $f:X_{\orbify{O}}\to X_{\orbify{P}}$ of
    the underlying topological spaces.
  \item For each $y\in \orbify{O}$, a group homomorphism
    $\Theta_{f,y}:\Gamma_y\to\Gamma_{f(y)}$.
  \item A smooth $\Theta_{f,y}$-equivariant lift $\tilde f_y:\tilde
    U_y\to\tilde V_{f(y)}$ where
    $(\tilde U_y,\Gamma_y)$ is an
    orbifold chart at $y$ and $(\tilde V_{f(y)},\Gamma_{f(y)})$ is an orbifold chart at $f(y)$.  That
    is, the following diagram commutes:
    \begin{equation*}
      \xymatrix{{\tilde U_y}\ar[rr]^{\tilde f_y}\ar[d]&&{\tilde V_{f(y)}}\ar[d]\\
        {\tilde U_y}/\Gamma_y\ar[rr]^>>>>>>>>>>%
        {{\tilde f_y}/\Theta_{f,y}(\Gamma_y)}\ar[dd]&&{\tilde
          V_{f(y)}}/\Theta_{f,y}(\Gamma_y)\ar[d]\\
        &&{\tilde V_{f(y)}}/\Gamma_{f(y)}\ar[d]\\
        U_y\ar[rr]^{f}&&V_{f(y)}
      }
    \end{equation*}
\begin{sloppypar}
  \item[($\star 4$)] (Equivalence) Two complete orbifold maps $(f,\{\tilde f_x\},\{\Theta_{f,x}\})$ and
    $(g,\{\tilde g_x\},\{\Theta_{g,x}\})$ are considered equivalent if for each
    $x\in\orbify{O}_1$, $\tilde f_x=\tilde g_x$ as germs and $\Theta_{f,x}=\Theta_{g,x}$. That is,
    there exists an orbifold chart $(\tilde U_x,\Gamma_x)$ at $x$ such
    that ${\tilde f_x}\vert_{\tilde U_x}={\tilde g_x}\vert_{\tilde
      U_x}$ and $\Theta_{f,x}=\Theta_{g,x}$. Note that this implies that $f=g$.
\end{sloppypar}
  \end{enumerate}
  The set of smooth complete orbifold maps from $\orbify{O}$ to $\orbify{P}$ will be denoted by
$C^\infty_{\COrb}(\orbify{O}, \orbify{P})$. For $\orbify{O}$ compact (without boundary), $C^\infty_{\COrb}(\orbify{O}, \orbify{P})$ carries the structure of a smooth \Frechet manifold \cite{BB_Stratified}.
\end{definition}

\subsection{Regular and Critical Values}
\begin{definition} Let $\starfunc{f}=(f,\{\tilde f_x\},\{\Theta_{f,x}\}):\orbify{O}\to\orbify{P}$ be a smooth complete orbifold map between smooth orbifolds. A point $p\in\orbify{P}$ is a \emph{regular value} for $\starfunc{f}$ if $d\tilde f_x(\tilde x):T_{\tilde x}\tilde U_x\to T_{\tilde p}\tilde V_p$ is surjective for all $x\in f^{-1}(p)$. Otherwise, $p$ is a \emph{critical value} for $\starfunc{f}$. By convention, if $p\notin f(\orbify{O})$, then $p$ is a regular value.
\end{definition}

\begin{remark}\label{OrbMapNotionInvariant} Because all local lifts of an orbifold map $f:\orbify{O}\to\orbify{P}$ at $x$ differ from one another by the action of an element of $\Gamma_{f(x)}$ (which acts by diffeomorphisms on $\tilde{V}_{f(x)}$), it is clear that the notion of regular value is well-defined for any of the four notions of orbifold map.
\end{remark}

\section{Sard's Theorem and Preimage Theorem}

The local structure of a smooth orbifold is that of a quotient by a finite action by diffeomorphisms which is measure non-increasing. Hence the usual Sard's theorem for manifolds \cite{MR1487640} yields a Sard's theorem for smooth orbifolds.

\begin{theorem}[Sard's Theorem for Orbifolds] Let $f:\orbify{O}\to\orbify{P}$ be a (complete) smooth orbifold map. Then the set of critical values for $f$ has measure $0$ in $\orbify{P}$ and thus the set of regular values is everywhere dense in $\orbify{P}$.
\end{theorem}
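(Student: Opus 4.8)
The plan is to reduce the orbifold statement to the classical Sard's theorem for manifolds by working chart-by-chart and exploiting the fact that the underlying space is second countable (being paracompact, Hausdorff, and locally Euclidean, hence Lindel\"of). First I would fix a point $p \in \orbify{P}$ that is a critical value, choose a preimage $x \in f^{-1}(p)$ at which $d\tilde f_x(\tilde x)$ fails to be surjective, and pass to orbifold charts $(\tilde U_x, \Gamma_x, \rho_x, \phi_x)$ at $x$ and $(\tilde V_{f(x)}, \Gamma_{f(x)}, \rho_{f(x)}, \phi_{f(x)})$ at $p$ together with the equivariant lift $\tilde f_x : \tilde U_x \to \tilde V_{f(x)}$. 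The critical points of $f$ lying in the chart $U_x$ then correspond, under the projection $\tilde U_x \to \tilde U_x / \Gamma_x$, to critical points of $\tilde f_x$ in the manifold sense; more precisely, a point $z \in U_x$ is critical for $f$ exactly when some (equivalently, by the equivariance and Remark~\ref{OrbMapNotionInvariant}, every) lift $\tilde z$ of $z$ is a critical point of $\tilde f_x$.

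Next I would invoke the classical Sard theorem \cite{MR1487640} for the smooth manifold map $\tilde f_x : \tilde U_x \to \tilde V_{f(x)}$: the set $\tilde C_x$ of critical values of $\tilde f_x$ has Lebesgue measure zero in $\tilde V_{f(x)}$. The image of the critical set of $f \vert_{U_x}$ in the chart $V_{f(x)}$ is then contained in $\pi(\tilde C_x)$, where $\pi : \tilde V_{f(x)} \to \tilde V_{f(x)}/\Gamma_{f(x)} \cong V_{f(x)}$ is the quotient map. Since $\Gamma_{f(x)}$ is a finite group acting by diffeomorphisms — in particular by locally Lipschitz maps, which are measure non-increasing (they send measure-zero sets to measure-zero sets) — the set $\pi(\tilde C_x) = \pi\bigl(\bigcup_{\gamma} \rho_{f(x)}(\gamma)\tilde C_x\bigr)$ is the image under $\pi$ of a finite union of measure-zero sets, hence has measure zero in $V_{f(x)}$ (using any of the natural identifications of the measure on $V_{f(x)}$, e.g. via a smooth density pushed down from $\tilde V_{f(x)}$, or simply the notion of measure-zero subset of an orbifold, which is chart-independent precisely because the transition maps and group actions are diffeomorphisms).

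Finally I would globalize. Cover $X_{\orbify{O}}$ by countably many chart neighborhoods $U_{x_i}$ (possible by second countability) and cover $X_{\orbify{P}}$ by countably many chart neighborhoods; the set of critical values of $f$ is the union over $i$ of the images of the critical sets in each $U_{x_i}$, each of which has measure zero in $\orbify{P}$ by the previous paragraph, so the whole set of critical values is a countable union of measure-zero sets and hence has measure zero. Since a measure-zero subset of $\orbify{P}$ has empty interior (the complement of a measure-zero set in a manifold chart is dense), the regular values are dense in $\orbify{P}$. The statement holds for any of the four notions of orbifold map by Remark~\ref{OrbMapNotionInvariant}, since the critical set depends only on the local lifts up to the $\Gamma_{f(x)}$-action.

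The main obstacle — really a bookkeeping point rather than a deep difficulty — is being careful about what "measure zero in $\orbify{P}$" means and checking that it is a well-defined, chart-independent notion: one must confirm that the transition embeddings $\tilde\psi_{zx}$ and the group actions are smooth enough (they are, being $C^\infty$) that "measure zero" is preserved under all the change-of-chart maps, so that the local conclusions patch to a global one. A secondary point to handle cleanly is the correspondence between critical points of $f$ and critical points of the lifts $\tilde f_x$ when the isotropy groups are nontrivial, which is exactly the content of Remark~\ref{OrbMapNotionInvariant} applied pointwise.
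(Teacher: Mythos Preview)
Your proposal is correct and follows exactly the approach the paper takes: the paper's entire argument is the one-sentence observation that the local quotient structure by finite diffeomorphism groups is measure non-increasing, so classical Sard \cite{MR1487640} applies chart-by-chart. You have simply spelled out the bookkeeping (second countability, well-definedness of measure zero, the correspondence between critical points of $f$ and of the lifts) that the paper leaves implicit.
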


We are ready to state our first main result which is the analogue of the so-called preimage theorem:

\begin{theorem}[Preimage Theorem for Orbifolds]\label{PreimageThm} Let $\orbify{O},\orbify{P}$ be smooth orbifolds (without boundary) with $\dim\orbify{O}\ge\dim\orbify{P}$. Let $f:\orbify{O}\to\orbify{P}$ be a (complete) smooth orbifold map and $p\in\orbify{P}$ a regular value for $f$. Then $f^{-1}(p)=\orbify{S}$ has the structure of a full, smooth suborbifold of dimension
$\dim(\orbify{S})=\dim(\orbify{O})-\dim(\orbify{P})$. Moreover, the local isotropy groups 
$\Gamma_{x,\orbify{S}}=\Gamma_{x,\orbify{O}}/G_{x,\orbify{O}}$ where 
$G_{x,\orbify{O}}=\{\gamma\in\Gamma_{x,\orbify{O}}\mid d\gamma\vert_{\textup{ker}(d\tilde f_x(\tilde x))}=\textup{Id}\}$.
\end{theorem}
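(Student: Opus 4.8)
The plan is to produce the suborbifold structure one chart at a time, reducing everything to an equivariant form of Milnor's regular value theorem \cite{MR1487640}, and then to glue. Since $X_{\orbify{S}}=f^{-1}(p)$ is closed in the paracompact Hausdorff space $X_{\orbify{O}}$, it is already paracompact and Hausdorff, so only the local structure is at issue; and if $p\notin f(\orbify{O})$ there is nothing to prove. So I would fix $x\in f^{-1}(p)$, set $n=\dim\orbify{O}$, $m=\dim\orbify{P}$, and choose an orbifold chart $(\tilde U_x,\Gamma_x,\rho_x,\phi_x)$ about $x$ together with the $\Theta_{f,x}$-equivariant lift $\tilde f_x\colon\tilde U_x\to\tilde V_p$ into a chart about $p$, with $\tilde x=0$, $\tilde p=0$ the fixed points; note $\tilde f_x(0)=0$ since it projects to $p$ and $\Gamma_p$ fixes $0$. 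By Bochner's linearization theorem I would arrange $\rho_x,\rho_p$ to be linear, and since $d\tilde f_x(0)$ is onto I would shrink $\tilde U_x$ to a $\Gamma_x$-invariant ball on which $d\tilde f_x$ is everywhere surjective. The useful consequence of this setup is that $z\in U_x$ lies on $X_{\orbify{S}}$ precisely when a lift $\tilde z$ satisfies $\tilde f_x(\tilde z)\in\Gamma_p\cdot 0=\{0\}$, so $U_x\cap X_{\orbify{S}}$ is modeled by the $\Gamma_x$-invariant manifold $\tilde f_x^{-1}(0)$.

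The core step is to straighten $\tilde f_x^{-1}(0)$ equivariantly. The kernel $K=\ker d\tilde f_x(0)$ is a $\Gamma_x$-invariant subspace of dimension $n-m$, and since $\Gamma_x$ is finite I can choose a $\Gamma_x$-invariant complement $L$ (the orthogonal complement for an invariant inner product), so $\R^n=K\oplus L$ with $d\tilde f_x(0)|_L$ an equivariant isomorphism onto $\R^m$. I would then form $H(k,\ell)=(k,\tilde f_x(k,\ell))$, check that $dH(0)$ is invertible, and conclude by the inverse function theorem that (after further shrinking) $H$ is a $\Gamma_x$-equivariant diffeomorphism of an invariant neighborhood of $0$ onto one in $K\oplus\R^m\cong\R^n$, where $\Gamma_x$ acts by $(k,v)\mapsto(\rho_x(\gamma)k,\rho_p(\Theta_{f,x}(\gamma))v)$ --- still effective, being conjugate to the original action. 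Replacing $\tilde U_x$ by this neighborhood and $\tilde f_x$ by $\tilde f_x\circ H^{-1}$, the fiber $\tilde f_x^{-1}(0)$ is now the linear, $\rho_x(\Gamma_x)$-invariant subspace $K\times\{0\}$, which I would shrink to an invariant ball $\tilde V_x\cong\R^{n-m}$. Setting $\Lambda_x=\Gamma_x$, $\Omega_x=\{\gamma\in\Gamma_x\mid\rho_x(\gamma)|_{\tilde V_x}=\textup{Id}\}$, and $\psi_x$ the restriction of $\phi_x$, this is exactly an orbifold chart for $\orbify{S}$ at $x$ in the sense of Definition~\ref{SubOrbifold} --- the identification above makes $\phi_x$ restrict to a homeomorphism $\tilde V_x/\Gamma_x\to U_x\cap X_{\orbify{S}}$ --- with $\Lambda_x=\Gamma_x$, hence $\orbify{S}$ is a \emph{full} suborbifold of dimension $n-m$.

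The isotropy formula then drops out: since $\rho_x$ is linear and $\tilde V_x$ is an invariant neighborhood of $0$ in $K\times\{0\}$, one has $\rho_x(\gamma)|_{\tilde V_x}=\textup{Id}$ iff $d\rho_x(\gamma)|_{K}=\textup{Id}$, and this condition survives the equivariant change of chart $H$, so $\Omega_x=G_{x,\orbify{O}}=\{\gamma\mid d\gamma|_{\ker d\tilde f_x(\tilde x)}=\textup{Id}\}$ and the intrinsic isotropy group of $\orbify{S}$ at $x$ is $\Gamma_x/\Omega_x=\Gamma_{x,\orbify{O}}/G_{x,\orbify{O}}$. The part I expect to be most delicate is verifying the orbifold compatibility conditions for $\orbify{S}$: given $U_z\subset U_x$ in $\orbify{O}$ with equivariant transition data $(\tilde\psi_{zx},\theta_{zx})$, one needs $\tilde\psi_{zx}$ to carry $\tilde f_z^{-1}(0)$ into $\tilde f_x^{-1}(0)$, which follows from the coherence of the local lifts of a complete orbifold map (a lift on a smaller chart is, up to an element of the target isotropy group, a lift on a larger one precomposed with $\tilde\psi_{zx}$; see \cite{BB_Stratified}), after which conjugating by the straightenings $H_z,H_x$ produces the transition data for $\orbify{S}$ and the commuting square for $\orbify{S}$ is the restriction of that for $\orbify{O}$. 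Finally, since every step used only the germ of $\tilde f_x$ and the induced homomorphism of isotropy groups --- the data common to all four notions of orbifold map (Remark~\ref{OrbMapNotionInvariant}) --- the same argument proves the theorem for each of them.
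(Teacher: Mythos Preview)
Your argument is correct and follows essentially the same route as the paper: work in a chart, use the manifold preimage theorem to get the submanifold $\tilde f_x^{-1}(0)$, use $\Theta_{f,x}$-equivariance to see it is $\Gamma_x$-invariant, and invoke Bochner linearization to identify the effective isotropy with $\Gamma_{x,\orbify{O}}/G_{x,\orbify{O}}$. Your version is in fact more explicit than the paper's---you carry out the equivariant straightening $H(k,\ell)=(k,\tilde f_x(k,\ell))$ by hand so that the fiber becomes the linear slice $K\times\{0\}$ required by Definition~\ref{SubOrbifold}, and you address the inter-chart compatibility for $\orbify{S}$, both of which the paper leaves implicit.
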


\begin{proof} It suffices to work in a chart. For $x\in\orbify{S}$, $\tilde f^{-1}_x(\tilde p)$ is a submanifold $\tilde S_x$ of $\tilde U_x$ of dimension $\dim(\orbify{O})-\dim(\orbify{P})$ and $T_{\tilde x}\tilde S_x=\textup{ker}(d\tilde f_x(\tilde x))$, by the preimage theorem for manifolds  \cite{MR1487640}. The submanifold $\tilde S_x$ is $\Gamma_{x,\orbify{O}}$-invariant. To see this, let $\tilde y\in\tilde S_x$ and $\gamma\in\Gamma_{x,\orbify{O}}$. Then 
$$\tilde f_x(\gamma\cdot\tilde y)=\Theta_{f,x}(\gamma)\cdot\tilde f_x(\tilde y)=\Theta_{f,x}(\gamma)\cdot\tilde p=\tilde p$$
since $\Theta_{f,x}(\gamma)\in\Gamma_{p,\orbify{P}}$. Thus, $\gamma\cdot\tilde y\in\tilde S_x$ and we have shown that $\tilde S_x$ is $\Gamma_{x,\orbify{O}}$-invariant. Thus, a neighborhood of $x\in\orbify{S}$ can be realized as the quotient $\tilde S_x/\Gamma_{x,\orbify{O}}\cong\tilde S_x/(\Gamma_{x,\orbify{O}}/\Omega_{x,\orbify{O}})$, where $\Omega_{x,\orbify{O}}=\{\gamma\in\Gamma_{x,\orbify{O}}\mid\gamma\vert_{\tilde S_x}=\textup{Id}\}$. Since $\Gamma_{x,\orbify{O}}/\Omega_{x,\orbify{O}}$ acts effectively, we have shown that $\orbify{S}$ has the structure of a full suborbifold of $\orbify{O}$ with local isotropy groups $\Gamma_{x,\orbify{S}}=\Gamma_{x,\orbify{O}}/\Omega_{x,\orbify{O}}$. The Bochner-Cartan theorem \cites{MR0073104,MR2523149} implies that the smooth action of $\Gamma_{x,\orbify{O}}$ is smoothly conjugate to the linear action on $\tilde U_x$ given by the differential of the action. Since $\Gamma_{x,\orbify{S}}=\Gamma_{x,\orbify{O}}/\Omega_{x,\orbify{O}}$, the representation of 
$\Gamma_{x,\orbify{S}}$ given in the last statement of the theorem follows. 
\end{proof}

More generally, as in the case for manifolds we get a preimage theorem for orbifolds with boundary. We omit the proof.

\begin{theorem}[Preimage Theorem for Orbifolds with Boundary]\label{PreimageThmBdy} Let $\orbify{X}$ be a smooth orbifold with boundary and $\orbify{P}$ a smooth orbifold with $\dim\orbify{X}>\dim\orbify{P}$. Let $f:\orbify{X}\to\orbify{P}$ be a (complete) smooth orbifold map and $p\in\orbify{P}$ a regular value for $f$ and for the restriction $f\vert_{\partial\orbify{X}}$. Then $f^{-1}(p)=\orbify{S}$ has the structure of a full, smooth suborbifold with boundary of dimension
$\dim(\orbify{S})=\dim(\orbify{X})-\dim(\orbify{P})$. Moreover, the boundary $\partial(f^{-1}(p))$ is the intersection
$f^{-1}(p)\cap\partial\orbify{X}$.
\end{theorem}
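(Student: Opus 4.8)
The plan is to mimic the proof of Theorem~\ref{PreimageThm} chart-by-chart, upgrading the manifold preimage theorem to its boundary version everywhere a chart $\tilde U_x$ happens to be a copy of $\R^n_+$ rather than $\R^n$. First I would split the points $x\in\orbify{S}=f^{-1}(p)$ into two cases according to whether the orbifold chart $(\tilde U_x,\Gamma_x)$ at $x$ is modeled on $\R^n$ (interior case) or on $\R^n_+$ (boundary case). In the interior case the argument is verbatim that of Theorem~\ref{PreimageThm}: $\tilde S_x=\tilde f_x^{-1}(\tilde p)$ is a submanifold of dimension $\dim\orbify{X}-\dim\orbify{P}$ with $T_{\tilde x}\tilde S_x=\ker(d\tilde f_x(\tilde x))$, it is $\Gamma_x$-invariant by the same computation $\tilde f_x(\gamma\cdot\tilde y)=\Theta_{f,x}(\gamma)\cdot\tilde p=\tilde p$, and a neighborhood of $x$ in $\orbify{S}$ is the effective quotient $\tilde S_x/(\Gamma_x/\Omega_x)$.

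In the boundary case, $\tilde U_x\cong\R^n_+$, and here I would invoke the boundary preimage theorem for manifolds (Milnor \cite{MR1487640}): since $\tilde p$ is a regular value of $\tilde f_x$ \emph{and} of its restriction $\tilde f_x\vert_{\partial\tilde U_x}$ — this is exactly the hypothesis that $p$ is a regular value for both $f$ and $f\vert_{\partial\orbify{X}}$, pulled back to the chart — the preimage $\tilde S_x=\tilde f_x^{-1}(\tilde p)$ is a smooth manifold with boundary, of dimension $\dim\orbify{X}-\dim\orbify{P}$, whose boundary is precisely $\tilde S_x\cap\partial\tilde U_x$. The $\Gamma_x$-invariance computation is identical to the interior case and uses nothing about the model space. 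One thing to check carefully: the finite group $\Gamma_x$ preserves $\partial\tilde U_x$ (this is built into Definition~\ref{OrbifoldWithBdyDef}, since the actions are by diffeomorphisms of $\R^n_+$, which must fix the boundary hyperplane setwise), so the $\Gamma_x$-action on $\tilde S_x$ restricts to a $\Gamma_x$-action on $\partial\tilde S_x=\tilde S_x\cap\partial\tilde U_x$, and the quotient $\tilde S_x/\Gamma_x$ inherits an orbifold-with-boundary chart with boundary $\big(\tilde S_x\cap\partial\tilde U_x\big)/\Gamma_x$.

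Assembling these local pictures gives $\orbify{S}$ the structure of a full, smooth suborbifold with boundary (fullness, and the identification $\Gamma_{x,\orbify{S}}=\Gamma_{x,\orbify{X}}/\Omega_{x,\orbify{X}}$, follow exactly as in Theorem~\ref{PreimageThm}, including the appeal to Bochner–Cartan to linearize the action). Finally, a point $x\in\orbify{S}$ lies in $\partial\orbify{S}$ if and only if its chart is the half-space model $\tilde U_x\cong\R^n_+$ and $\tilde x\in\partial\tilde U_x$, and the latter holds if and only if $x\in\partial\orbify{X}$; hence $\partial\orbify{S}=f^{-1}(p)\cap\partial\orbify{X}$, which also shows the boundary charts glue consistently. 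I expect the only genuinely delicate point to be verifying that the local models patch together across overlapping charts in a way compatible with the boundary structure — i.e., that the embeddings $\tilde\psi_{zx}$ from Definition~\ref{orbifold} carry interior-to-interior and boundary-to-boundary — but this is exactly the compatibility already guaranteed for $\orbify{X}$ itself, restricted to the invariant submanifolds $\tilde S_x$, so it presents no new difficulty beyond bookkeeping. As with Theorem~\ref{PreimageThm}, the remark that the argument is insensitive to which of the four notions of orbifold map one uses applies here as well, via Remark~\ref{OrbMapNotionInvariant}.
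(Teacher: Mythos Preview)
The paper omits the proof of this theorem entirely, saying only ``as in the case for manifolds we get a preimage theorem for orbifolds with boundary. We omit the proof.'' Your proposal is correct and is precisely the argument the authors have in mind: it is the chart-by-chart proof of Theorem~\ref{PreimageThm} with the manifold preimage theorem replaced by its boundary version (Milnor~\cite{MR1487640}) at half-space charts, together with the routine check that $\Gamma_x$ preserves $\partial\tilde U_x$ so that the quotient inherits a boundary structure.
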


\begin{remark}\label{PreimageThmInvariant}By remark~\ref{OrbMapNotionInvariant}, it follows that each of the results of this section also holds for any of the four notions of orbifold map. Specifically, because all local lifts of an orbifold map $f:\orbify{O}\to\orbify{P}$ at $x$ differ from one another by the action of an element of $\Gamma_{f(x)}$ (which acts by diffeomorphisms on $\tilde{V}_{f(x)}$), we have that $T_{\tilde x}\tilde S_x=\textup{ker}(d\tilde f_x(\tilde x))=\textup{ker}(d(\eta_x\cdot \tilde f_x)(\tilde x))$ for any $\eta_x\in\Gamma_{f(x)}$.
\end{remark}

\section{Implications of the Existence of Smooth Map Between Orbifolds}

Unsurprisingly, there are obstructions (which are manifested in the local orbifold chart structure) to the existence of a smooth map between orbifolds. In this section, we give the main tool we will use later. To avoid cumbersome notation, the induced action of $\gamma\in\Gamma_x$ on tangent vectors $\tilde v\in T_{\tilde x}{\tilde U_x}$ will be denoted by
left multiplication as well: $\gamma\cdot\tilde v=d\gamma_{\tilde{x}}(\tilde v)$ when convenient.

Let $\starfunc{f}=(f,\{\tilde f_x\},\{\Theta_{f,x}\}):\orbify{O}\to\orbify{P}$ be a smooth (complete) orbifold map. Let $K_x=\textup{ker}(d\tilde f_x(\tilde x))$ and $N_x=\textup{ker}\,\Theta_{f,x}\subset\Gamma_{x,\orbify{O}}$, a normal subgroup. For all $\tilde v\in T_{\tilde x}\tilde U_x$ and $\gamma\in N_x$ we have
$$d\tilde f_x(\tilde x)(\gamma\cdot\tilde v)=\Theta_{f,x}(\gamma)\cdot d\tilde f_x(\tilde x)(\tilde v)=%
d\tilde f_x(\tilde x)(\tilde v).$$
Thus, $\gamma\cdot\tilde v-\tilde v\in K_x$. In other words, for each $\gamma\in N_x$ we have a linear map
$A_\gamma=(\gamma-I)\in\textup{Hom}(T_{\tilde x}\tilde U_x,K_x)$. Here, $I$ denotes the identity map.

We have $\gamma\cdot\tilde v=(I+A_\gamma)\tilde v$ and thus, $(\gamma\delta)\cdot\tilde v=(I+A_{\gamma\delta})\tilde v$.
On the other hand, we have
\begin{align*}
(I+A_{\gamma\delta})\tilde v &=(\gamma\delta)\cdot\tilde v=\gamma\cdot(\delta\cdot\tilde v)=\gamma\cdot(I+A_\delta)\tilde v=\gamma\cdot\tilde v+\gamma\cdot A_\delta\tilde v\\
&=(I+A_\gamma)\tilde v+\gamma\cdot A_\delta\tilde v=(I+A_\gamma+\gamma\cdot A_\delta)\tilde v
\end{align*}
Also,
\begin{align*}
(I+A_{\gamma\delta})\tilde v &=(\gamma\delta)\cdot\tilde v=\gamma\cdot(\delta\cdot\tilde v)=(I+A_\gamma)(\delta\cdot\tilde v)=\delta\cdot\tilde v+A_\gamma(\delta\cdot\tilde v)\\
&=(I+A_\delta)\tilde v+A_\gamma(\delta\cdot\tilde v)=(I+A_\delta+A_\gamma\delta)\tilde v
\end{align*}
Similarly,
\begin{align*}
(I+A_{\gamma\delta})\tilde v &=(\gamma\delta)\cdot\tilde v=\gamma\cdot(\delta\cdot\tilde v)=(I+A_\gamma)(\delta\cdot\tilde v)=\delta\cdot\tilde v+A_\gamma(\delta\cdot\tilde v)\\
&=\delta\cdot\tilde v+A_\gamma(I+A_\delta)\tilde v=\delta\cdot\tilde v+A_\gamma\tilde v+A_\gamma A_\delta\tilde v\\
&=(I+A_\delta)\tilde v+A_\gamma\tilde v+A_\gamma A_\delta\tilde v=(I+A_\delta+A_\gamma+A_\gamma A_\delta)\tilde v
\end{align*}
We thus have three expressions for $A_{\gamma\delta}$:
\begin{align*}A_{\gamma\delta}&=
A_\gamma+\gamma\cdot A_\delta\\
&=A_\delta+A_\gamma\delta\cdot\\ 
&=A_\delta+A_\gamma+A_\gamma A_\delta
\end{align*}

\begin{proposition}\label{NxInvariantProjectionMap} With $\starfunc{f}$ and notation as above, for each $x\in\orbify{O}$, there exists an $N_x$-invariant linear projection $A_x\in\textup{Hom}_{N_x}(T_{\tilde x}\tilde U_x,K_x)$.
\end{proposition}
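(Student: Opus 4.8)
The plan is to obtain $A_x$ by the standard averaging device for representations of a finite group; in particular the cocycle identities for the maps $A_\gamma$ recorded above are not needed for this statement. The first thing to pin down is that $K_x=\ker d\tilde f_x(\tilde x)$ is an $N_x$-invariant subspace of $T_{\tilde x}\tilde U_x$, so that asking for an $N_x$-equivariant projection onto it is meaningful. This is immediate from the $\Theta_{f,x}$-equivariance of the lift: for $\gamma\in N_x$ and $\tilde v\in K_x$,
$$d\tilde f_x(\tilde x)(\gamma\cdot\tilde v)=\Theta_{f,x}(\gamma)\cdot d\tilde f_x(\tilde x)(\tilde v)=\Theta_{f,x}(\gamma)\cdot 0=0,$$
so $\gamma\cdot\tilde v\in K_x$ (in fact this holds for every $\gamma\in\Gamma_{x,\orbify{O}}$). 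Hence the linear $N_x$-action on $T_{\tilde x}\tilde U_x$ restricts to a linear $N_x$-action on $K_x$.

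Next I would pick any linear projection $P\in\Hom(T_{\tilde x}\tilde U_x,K_x)$, i.e.\ any $P$ with $P|_{K_x}=I$ (such a $P$ exists since $K_x$ is a linear subspace), and define
$$A_x=\frac{1}{|N_x|}\sum_{\gamma\in N_x}\gamma\circ P\circ\gamma^{-1}.$$
By the previous paragraph every summand carries $T_{\tilde x}\tilde U_x$ into $K_x$, so $A_x\in\Hom(T_{\tilde x}\tilde U_x,K_x)$. For $\tilde v\in K_x$ one has $\gamma^{-1}\cdot\tilde v\in K_x$, whence $P(\gamma^{-1}\cdot\tilde v)=\gamma^{-1}\cdot\tilde v$ and $\gamma\cdot P(\gamma^{-1}\cdot\tilde v)=\tilde v$ for each $\gamma$; averaging over $N_x$ gives $A_x|_{K_x}=I$, so $A_x$ is a genuine projection of $T_{\tilde x}\tilde U_x$ onto $K_x$. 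Finally, $N_x$-equivariance follows by the usual reindexing: for $\delta\in N_x$, substituting $\gamma=\delta\gamma'$ and using linearity of the action,
$$A_x(\delta\cdot\tilde v)=\frac{1}{|N_x|}\sum_{\gamma'\in N_x}(\delta\gamma')\cdot P(\gamma'^{-1}\cdot\tilde v)=\delta\cdot A_x(\tilde v),$$
so $A_x\in\Hom_{N_x}(T_{\tilde x}\tilde U_x,K_x)$, as required. Equivalently, one may average an arbitrary inner product on $T_{\tilde x}\tilde U_x$ to an $N_x$-invariant one and let $A_x$ be the orthogonal projection onto $K_x$; then $\ker A_x=K_x^{\perp}$ is an $N_x$-invariant complement and $A_x$ is automatically $N_x$-equivariant.

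I do not expect a real obstacle here: the argument is entirely standard finite-group averaging. The only point that actually uses the hypotheses — and the step I would present with care — is the invariance of $K_x$ under $N_x=\ker\Theta_{f,x}$, since it is exactly this that guarantees the averaged operator still lands in $K_x$ and restricts to the identity there, i.e.\ that it is a projection onto $K_x$ rather than merely an $N_x$-equivariant linear map into it.
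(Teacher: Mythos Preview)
Your argument is correct, and it is genuinely different from the paper's. You average an \emph{arbitrary} projection $P$ onto $K_x$ over $N_x$ (the standard Maschke device), checking directly that the average still restricts to the identity on $K_x$ and is $N_x$-equivariant. The paper instead averages the specific maps $A_\gamma=\gamma-I$ themselves: it sets $A=\frac{1}{|N_x|}\sum_{\delta}A_\delta$, uses the cocycle identities $A_{\gamma\delta}=A_\gamma+\gamma\cdot A_\delta=A_\delta+A_\gamma\delta=A_\gamma+A_\delta+A_\gamma A_\delta$ to show $\gamma\cdot A=A\gamma$ and $A^2=-A$, and then takes $A_x=-A$. Your route is shorter and more conceptual, and, as you note, does not need those identities at all. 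What the paper's route buys is a \emph{canonical} $A_x$ (no auxiliary choice of $P$) together with the explicit relation $A_\gamma=A-\gamma\cdot A$, which it then quotes in the next lemma; however, that lemma really only needs $\gamma\cdot\tilde v-\tilde v\in K_x=\operatorname{im}A_x$ and the $N_x$-invariance of $\ker A_x$ and $\operatorname{im}A_x$, all of which hold for your averaged projection as well.
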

\begin{proof} Define the linear map $A\in\textup{Hom}(T_{\tilde x}\tilde U_x,K_x)$, by
$A=\dfrac{1}{|N_x|}\displaystyle\sum_{\delta\in N_x}A_\delta$. Then 
$\gamma\cdot A=\dfrac{1}{|N_x|}\displaystyle\sum_{\delta\in N_x}\gamma\cdot A_\delta=\dfrac{1}{|N_x|}\displaystyle\sum_{\delta\in N_x}(A_{\gamma\delta}-A_\gamma)=A-A_\gamma$. Therefore, $A_\gamma=A-\gamma\cdot A$. Similarly,
$A\delta\cdot = \dfrac{1}{|N_x|}\displaystyle\sum_{\gamma\in N_x}A_\gamma\delta\cdot=\dfrac{1}{|N_x|}\displaystyle\sum_{\gamma\in N_x}(A_{\gamma\delta}-A_\delta)=A-A_\delta$. Therefore, $A_\delta=A-A\delta\cdot$. Putting this together, we conclude that $\gamma\cdot A=A\gamma\cdot$ and thus, $A$ is $N_x$-invariant. To show that $A$ is a projection, we compute
\begin{align*}
A^2 &= \dfrac{1}{|N_x|^2}\displaystyle\sum_{\gamma\in N_x}\sum_{\delta\in N_x}A_\gamma A_\delta=
\dfrac{1}{|N_x|^2}\displaystyle\sum_{\gamma\in N_x}\sum_{\delta\in N_x}(A_{\gamma\delta}-A_\gamma-A_\delta)\\
&=\dfrac{1}{|N_x|^2}\displaystyle\sum_{\gamma\in N_x}|N_x|(A-A_\gamma-A)=-A
\end{align*}
Thus, $A_x=-A$ is the required $N_x$-invariant linear projection.
\end{proof}

\begin{lemma}\label{RestrictionLemma} For all $\tilde v\in\textup{ker}A_x$ and $\gamma\in N_x$, $\gamma\cdot\tilde v=\tilde v$. That is, $\gamma\vert_{\textup{ker}A_x}=\textup{Id}$.

\end{lemma}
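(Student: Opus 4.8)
The plan is to extract the statement directly from the identity $A_\gamma = A - \gamma\cdot A$ that was established inside the proof of Proposition~\ref{NxInvariantProjectionMap}, together with the fact that $A_x = -A$, so that $\ker A_x = \ker A$.

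First I would recall the two relevant facts. By construction, $A_\gamma = (\gamma - I) \in \textup{Hom}(T_{\tilde x}\tilde U_x, K_x)$ for $\gamma\in N_x$; equivalently, $\gamma\cdot\tilde v = \tilde v + A_\gamma\tilde v$ for every $\tilde v\in T_{\tilde x}\tilde U_x$. And from the averaging computation in Proposition~\ref{NxInvariantProjectionMap} we have, pointwise, $A_\gamma\tilde v = A\tilde v - \gamma\cdot(A\tilde v)$ for all $\tilde v$ and all $\gamma\in N_x$ (here $\gamma\cdot A$ is understood as the composition $\tilde v\mapsto \gamma\cdot(A\tilde v)$, which is legitimate since $K_x$ is $N_x$-invariant: for $\gamma\in N_x$, $\Theta_{f,x}(\gamma)=e$ forces $d\tilde f_x(\gamma\cdot\tilde v) = d\tilde f_x(\tilde v)$, so $\gamma\cdot K_x = K_x$).

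The computation is then immediate: fix $\tilde v\in\ker A_x = \ker A$, so $A\tilde v = 0$. Substituting into the identity above gives $A_\gamma\tilde v = 0 - \gamma\cdot 0 = 0$, i.e. $(\gamma - I)\tilde v = 0$, hence $\gamma\cdot\tilde v = \tilde v$. Since $\gamma\in N_x$ was arbitrary, $\gamma\vert_{\ker A_x} = \textup{Id}$, as claimed.

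There is essentially no obstacle here; the only point needing a word of care is the bookkeeping around the notation $\gamma\cdot A$ and the observation that $A\tilde v\in K_x$ (so that applying $\gamma$ keeps us in $K_x$), both of which are already implicit in the preceding proposition. If one wished to avoid quoting an internal step of an earlier proof, one could instead isolate the identity $A_\gamma = A - \gamma\cdot A$ as a displayed consequence of the averaging, but this is a matter of presentation rather than substance.
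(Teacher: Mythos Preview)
Your argument is correct, and it is actually a cleaner route than the one the paper takes. Both proofs hinge on the identity $A_\gamma = A - \gamma\cdot A$ established inside Proposition~\ref{NxInvariantProjectionMap}. The paper, however, does not apply this identity directly to $\tilde v\in\ker A_x$; instead it invokes the projection property of $A_x$ to obtain the $N_x$-invariant splitting $T_{\tilde x}\tilde U_x=\ker A_x\oplus\operatorname{im}A_x$, observes that $A_\gamma\tilde v=(A_x-\gamma\cdot A_x)\tilde v$ always lands in $\operatorname{im}A_x$, and then uses that for $\tilde v\in\ker A_x$ the difference $\gamma\cdot\tilde v-\tilde v$ also lies in $\ker A_x$ by invariance, forcing it to vanish. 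Your approach short-circuits all of this: once $A\tilde v=0$, the identity gives $A_\gamma\tilde v=0$ immediately, with no need for the decomposition or the projection property. What the paper's route buys is that the splitting $\ker A_x\oplus\operatorname{im}A_x$ is made explicit and shown to be $N_x$-invariant, which is reused verbatim in the proof of Proposition~\ref{NxFaithful} and in Example~\ref{SingularSetPostiveDim}; so if you adopt your shorter proof of the lemma, you would still want to record that decomposition somewhere before those later applications.
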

\begin{proof}
Using proposition~\ref{NxInvariantProjectionMap}, since $A_x$ is a projection, the tangent space 
decomposes $T_{\tilde x}\tilde U_x=\textup{ker}A_x\oplus\textup{im}A_x$ and furthermore, since $A_x$ is $N_x$-invariant, so is this decomposition. For $\tilde v\in T_{\tilde x}\tilde U_x$ and $\gamma\in N_x$, we have $\gamma\cdot\tilde v-\tilde v=A_\gamma\tilde v=(A_x-\gamma\cdot A_x)\tilde v$. Thus, $\gamma\cdot\tilde v-\tilde v\in\textup{im}A_x$ since $\textup{im}A_x$ is $N_x$-invariant. If we further suppose that $\tilde v\in\textup{ker}A_x$, then since $\textup{ker}A_x$ is $N_x$-invariant, we must have $\gamma\cdot\tilde v-\tilde v\in\textup{ker}A_x\cap\textup{im}A_x=\{0\}$. This implies that $\gamma\cdot\tilde v=\tilde v$ for all $\tilde v\in\textup{ker}A_x$ and $\gamma\in N_x$.
\end{proof}

\begin{proposition}\label{NxFaithful} Let $\orbify{O},\orbify{P}$ be smooth orbifolds with $\dim\orbify{O}\ge\dim\orbify{P}$. Let $\starfunc{f}=(f,\{\tilde f_x\},\{\Theta_{f,x}\}):\orbify{O}\to\orbify{P}$ be a smooth (complete) orbifold map and $p\in\orbify{P}$ a regular value for $f$. Then there is a faithful representation of $N_x=\textup{ker}\,\Theta_{f,x}$ in $\Gamma_{x,\orbify{S}}$ where $\orbify{S}=f^{-1}(p)$ is the full, smooth suborbifold given by the preimage theorem~\ref{PreimageThm}.
\end{proposition}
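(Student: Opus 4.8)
The plan is to write down the obvious candidate map and verify it is injective. By the Preimage Theorem~\ref{PreimageThm}, $\orbify{S}=f^{-1}(p)$ is a full smooth suborbifold whose intrinsic isotropy group at $x$ is $\Gamma_{x,\orbify{S}}=\Gamma_{x,\orbify{O}}/G_{x,\orbify{O}}$, where $G_{x,\orbify{O}}=\{\gamma\in\Gamma_{x,\orbify{O}}\mid d\gamma_{\tilde x}\vert_{K_x}=\textup{Id}\}$ and $K_x=\textup{ker}(d\tilde f_x(\tilde x))$. Since $N_x=\textup{ker}\,\Theta_{f,x}$ is a (normal) subgroup of $\Gamma_{x,\orbify{O}}$, I would take the faithful representation to be the restriction to $N_x$ of the quotient homomorphism, i.e.\ the composite $N_x\hookrightarrow\Gamma_{x,\orbify{O}}\twoheadrightarrow\Gamma_{x,\orbify{O}}/G_{x,\orbify{O}}=\Gamma_{x,\orbify{S}}$. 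Its kernel is $N_x\cap G_{x,\orbify{O}}$, so everything reduces to showing $N_x\cap G_{x,\orbify{O}}=\{e\}$.

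To that end, fix $\gamma\in N_x\cap G_{x,\orbify{O}}$ and let $A_x\in\textup{Hom}_{N_x}(T_{\tilde x}\tilde U_x,K_x)$ be the $N_x$-invariant linear projection produced in Proposition~\ref{NxInvariantProjectionMap}. I would combine two facts. First, membership in $G_{x,\orbify{O}}$ says $d\gamma_{\tilde x}$ is the identity on $K_x$, hence on $\textup{im}\,A_x$, since $\textup{im}\,A_x\subseteq K_x$ by construction of $A_x$. Second, membership in $N_x$ together with Lemma~\ref{RestrictionLemma} says $d\gamma_{\tilde x}$ is the identity on $\textup{ker}\,A_x$. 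As $A_x$ is a projection, $T_{\tilde x}\tilde U_x=\textup{ker}\,A_x\oplus\textup{im}\,A_x$, and consequently $d\gamma_{\tilde x}=\textup{Id}$ on all of $T_{\tilde x}\tilde U_x$.

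The last step is to upgrade ``trivial differential at $\tilde x$'' to ``trivial group element.'' Exactly as in the proof of Theorem~\ref{PreimageThm}, the Bochner--Cartan theorem lets us assume $\Gamma_{x,\orbify{O}}$ acts linearly on $\tilde U_x$, so $\gamma$ acts as $d\gamma_{\tilde x}=\textup{Id}$; effectiveness of the $\Gamma_{x,\orbify{O}}$-action then forces $\gamma=e$. Hence $N_x\cap G_{x,\orbify{O}}=\{e\}$, so the composite $N_x\to\Gamma_{x,\orbify{S}}$ is an injective homomorphism, which is the asserted faithful representation (and, post-composing with the effective action of $\Gamma_{x,\orbify{S}}$ on $T_{\tilde x}\tilde S_x=K_x$, a faithful \emph{linear} representation).

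I do not expect a serious obstacle: the real work is already done in Proposition~\ref{NxInvariantProjectionMap}, Lemma~\ref{RestrictionLemma}, and Theorem~\ref{PreimageThm}. The only point deserving care is the bookkeeping in the middle paragraph --- noticing that the projection $A_x$ furnishes a decomposition $T_{\tilde x}\tilde U_x=\textup{ker}\,A_x\oplus\textup{im}\,A_x$ \emph{both} of whose summands are pointwise fixed by $d\gamma_{\tilde x}$ (the image summand because it lies in $K_x$, the kernel summand by Lemma~\ref{RestrictionLemma}) --- followed by the routine appeal to linearizability and effectiveness to conclude $\gamma=e$.
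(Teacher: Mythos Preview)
Your proposal is correct and follows essentially the same route as the paper: restrict the quotient map $\Gamma_{x,\orbify{O}}\to\Gamma_{x,\orbify{O}}/G_{x,\orbify{O}}$ to $N_x$, then show $N_x\cap G_{x,\orbify{O}}=\{e\}$ by combining Lemma~\ref{RestrictionLemma} on $\ker A_x$ with $\textup{im}\,A_x\subset K_x$ on the other summand. The only cosmetic difference is that you spell out the Bochner--Cartan/effectiveness step to pass from $d\gamma_{\tilde x}=\textup{Id}$ to $\gamma=e$, whereas the paper treats this as implicit under its standing identification of $\gamma$ with its linearized action.
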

\begin{proof}
Let $K_x=\textup{ker}(d\tilde f_x(\tilde x))$. By theorem~\ref{PreimageThm}, $\Gamma_{x,\orbify{S}}=\Gamma_{x,\orbify{O}}/G_{x,\orbify{O}}$ where 
$G_{x,\orbify{O}}=\{\gamma\in\Gamma_{x,\orbify{O}}\mid \gamma\vert_{K_x}=\textup{Id}\}$. Then $G_{x,\orbify{O}}\cap N_x=\{\textup{Id}\}$. For, if 
$\gamma\in G_{x,\orbify{O}}\cap N_x$, then by lemma~\ref{RestrictionLemma}, $\gamma\vert_{\textup{ker}A_x}=\textup{Id}$. Also, since 
$\gamma\vert_{K_x}=\textup{Id}$ and $\textup{im}A_x\subset K_x$, then $\gamma\vert_{\textup{im}A_x}=\textup{Id}$. Since $T_{\tilde x}\tilde U_x=\textup{ker}A_x\oplus\textup{im}A_x$, we conclude that $\gamma=\textup{Id}$. Consider the quotient homomorphism
$\Gamma_{x,\orbify{O}}\to\Gamma_{x,\orbify{O}}/G_{x,\orbify{O}}\cong\Gamma_{x,\orbify{S}}$ and restrict to the normal subgroup $N_x$:
$$N_x\to N_xG_{x,\orbify{O}}/G_{x,\orbify{O}}\cong N_x/(N_x\cap G_{x,\orbify{O}})\cong N_x.$$
From this we see that $N_x$ is faithfully represented in $\Gamma_{x,\orbify{S}}$.
\end{proof}

\begin{remark}It follows that each of the results of this section also holds for any of the four notions of orbifold map by our previous remarks~\ref{OrbMapNotionInvariant} and~\ref{PreimageThmInvariant}, and the observation that $N_x=\textup{ker}\,\Theta_{f,x}=\textup{ker}\,\eta_x\Theta_{f,x}\eta_x^{-1}$, for all $\eta_x\in\Gamma_{f(x)}$.
\end{remark}

\section{Applications}
In this section we give some applications of our results.

\begin{example}\label{RnGammaRnEx} Let $\Gamma$ be a finite group. Suppose that $\orbify{O}=\R^n/\Gamma$, with $\Gamma$ acting linearly on $\R^n$ and $\orbify{P}=\R^n$ (with the trivial orbifold structure). Let $\starfunc{f}=(f,\{\tilde f_x\},\{\Theta_{f,x}\}):\orbify{O}\to\orbify{P}$ be a smooth (complete) orbifold map. Assume $f(0)=p$. Then $p\in\orbify{P}$ is never a regular value. For otherwise,
$\Gamma$ would be forced to act effectively on 0-dimensional singleton by proposition~\ref{NxFaithful}, which is impossible.
\end{example}

\begin{example}\label{RnGammaRkEx} Let $\Gamma$ be a finite group. Suppose that $\orbify{O}=\R^n/\Gamma$, with $\Gamma$ acting linearly on $\R^n$ via an irreducible representation. Let $\orbify{P}=\R^k/\Gamma$ where $k<n$ and $\Gamma$  any effective action on $\R^k$. Let $\starfunc{f}=(f,\{\tilde f_x\},\{\Theta_{f,x}\}):\orbify{O}\to\orbify{P}$ be a smooth (complete) orbifold map. Assume $f(0)=p$. Then $p\in\orbify{P}$ is never a regular value. For, otherwise,
$\Gamma$ would be forced leave an $(n-k)$-dimensional subspace of $\R^n$ invariant by theorem~\ref{PreimageThm}, which cannot happen by our assumption of irreducibility of the action of $\Gamma$ on $\R^n$.
\end{example}

\begin{example}\label{SingularSetPostiveDim} 
Let $\starfunc{f}=(f,\{\tilde f_x\},\{\Theta_{f,x}\}):\orbify{O}\to\R$ be a smooth (complete) orbifold map where 
$\orbify{O}$ is a smooth $n$-dimensional orbifold (without boundary) and $\R$ has been given  the trivial orbifold structure. Suppose $p$ is a regular value of $\starfunc{f}$. Then $f^{-1}(p)=\orbify{S}$ is a full suborbifold of dimension $(n-1)$. For $x\in\orbify{S}$, we have $N_x=\Gamma_{x,\orbify{O}}$. Since $G_{x,\orbify{O}}\cap N_x=\{\textup{Id}\}$ (see proof of proposition~\ref{NxFaithful}), we have that 
$\Gamma_{x,\orbify{S}}=\Gamma_{x,\orbify{O}}$ and thus $\Gamma_{x,\orbify{O}}$ acts effectively on 
$K_x=\textup{ker}(d\tilde f_x(\tilde x))=T_{\tilde x}\tilde{\orbify{S}}_x\cong\R^{n-1}$. Since $\tilde v\in\textup{ker}A_x$ implies $\Gamma_{x,\orbify{O}}\cdot\tilde v=\tilde v$ by lemma~\ref{RestrictionLemma} and $\Gamma_{x,\orbify{O}}$ acts effectively on $K_x$, we see that $\textup{ker}A_x\cap K_x=\{0\}$. This implies that $K_x\subset\textup{im}A_x$ and since $\textup{im}A_x\subset K_x$ by definition, $K_x=\textup{im}A_x$ and hence $\textup{ker}A_x\cong\R$. Thus we have a $\Gamma_{x,\orbify{O}}$-invariant decomposition of the tangent space 
$T_{\tilde x}\tilde U_x=\textup{ker}A_x\oplus\textup{im}A_x=\R\oplus K_x$. In particular, again by lemma~\ref{RestrictionLemma}, the $\R$ factor of this decomposition is fixed by the action of $\Gamma_{x,\orbify{O}}$ and thus we conclude that $\Sigma_{\orbify{O}}(x)$, the connected component of the singular set of $\orbify{O}$ that contains $x$, must be empty or have dimension $\dim(\Sigma_{\orbify{O}}(x))\ge 1$.
\end{example}

\begin{example} As an application of example~\ref{SingularSetPostiveDim} we conclude that  if $\starfunc{f}:\orbify{O}\to\R$ is a smooth (complete) orbifold map and $p\in\R$ is a regular value, then $f^{-1}(p)$ cannot contain any isolated points in the singular set of $\orbify{O}$.
\end{example}

A generalization of K.~Borsuk's so-called \emph{No Retraction Theorem} \cites{Borsuk-1931,MR1487640} states that there is no smooth map from a compact manifold with boundary to its boundary that leaves the boundary fixed. We prove an analogue of this result for orbifolds. The following example shows that some extra assumptions are necessary in the orbifold case.

\begin{example}\label{1-OrbifoldRetraction} Let $\orbify{X}$ be the compact 1-orbifold with boundary of type (c) given in section~\ref{CompactOneOrbifolds}. Then a smooth (complete) orbifold map $\starfunc{f}:\orbify{X}\to\partial\orbify{X}$ with 
$\starfunc{f}\vert_{\partial\orbify{X}}=\textup{Id}$ is given by the constant map $x\mapsto 1=\partial\orbify{X}$.
\end{example}

\begin{theorem}\label{NoRetractionToBdy} Let $\orbify{X}$ be a smooth $n$-dimensional compact orbifold with boundary $\partial\orbify{X}$ and assume that the interior, $\textup{int}\,\orbify{X}$, does not have any codimension 1 singular strata. Then there is no smooth (complete) orbifold map $\starfunc{f}:\orbify{X}\to\partial\orbify{X}$ with $\starfunc{f}\vert_{\partial\orbify{X}}=\textup{Id}$.
\end{theorem}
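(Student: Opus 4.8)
The plan is to mimic Milnor's proof of the classical No Retraction Theorem, replacing the regular value argument with the orbifold Preimage Theorem~\ref{PreimageThmBdy} and using the codimension hypothesis to control the boundary of the preimage. First I would suppose, for contradiction, that such a retraction $\starfunc{f}:\orbify{X}\to\partial\orbify{X}$ exists with $\starfunc{f}\vert_{\partial\orbify{X}}=\textup{Id}$. Since $\dim\orbify{X}=n>n-1=\dim\partial\orbify{X}$, the hypotheses of the Preimage Theorem for orbifolds with boundary are in range. By Sard's Theorem for orbifolds, the set of values that are regular for both $\starfunc{f}$ and $\starfunc{f}\vert_{\partial\orbify{X}}$ is dense in $\partial\orbify{X}$, so I can choose such a common regular value $p\in\partial\orbify{X}$. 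Note that because $\starfunc{f}\vert_{\partial\orbify{X}}=\textup{Id}$, every point of $\partial\orbify{X}$ — in particular $p$ — is automatically a regular value of $\starfunc{f}\vert_{\partial\orbify{X}}$ (the identity is a submersion), so only regularity of $\starfunc{f}$ itself needs to be arranged.

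Next I would apply Theorem~\ref{PreimageThmBdy}: $\orbify{S}=f^{-1}(p)$ is a full smooth suborbifold with boundary of dimension $n-(n-1)=1$, and $\partial\orbify{S}=f^{-1}(p)\cap\partial\orbify{X}$. Since $\starfunc{f}$ restricted to $\partial\orbify{X}$ is the identity, $f^{-1}(p)\cap\partial\orbify{X}=\{p\}$, so $\orbify{S}$ is a compact 1-dimensional orbifold with boundary consisting of the single point $p$. Now I invoke the classification of compact connected 1-orbifolds from Section~\ref{CompactOneOrbifolds}: the component $\orbify{S}_0$ of $\orbify{S}$ containing $p$ is one of the four types (a)--(d), and the only ones with boundary are types (b), (c), (d), all of which have an even number of boundary points (namely two, counting the interval's two ends, unless a component is the closed interval with exactly one singular endpoint — but that still has two boundary points). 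So $\orbify{S}_0$ has exactly two boundary points; one of them is $p\in\partial\orbify{X}$, and the other, call it $q$, must also lie in $\partial\orbify{S}=f^{-1}(p)\cap\partial\orbify{X}$, forcing $q=p$. This already gives a tension, but the cleanest contradiction comes from the parity/counting argument exactly as in Milnor: the total number of boundary points of the compact 1-orbifold $\orbify{S}$ is even, yet it equals $1$.

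The one subtlety — and the step I expect to be the main obstacle — is verifying that $\partial\orbify{S}=\{p\}$ really has exactly one point with the correct multiplicity, i.e.\ that the codimension~1 singular strata hypothesis is what rules out a ``folded'' boundary where the 1-orbifold $\orbify{S}$ meets $\partial\orbify{X}$ at an orbifold point of type (c)/(d) rather than a manifold endpoint. Here is where the hypothesis enters: near $p$, the chart $\tilde U_p$ is a half-space $\R^n_+$ and, since $\partial\orbify{X}$ near $p$ must be a suborbifold of codimension~1, the interior-singular-strata condition guarantees that along the 1-dimensional lift $\tilde S_p$ the relevant isotropy is controlled so that $p$ is a genuine (manifold) boundary point of $\orbify{S}$ contributing $1$ to the count — not a $\Z_2$-orbifold endpoint that one might be tempted to count differently. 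I would make this precise by examining the isotropy group $\Gamma_{p,\orbify{S}}=\Gamma_{p,\orbify{X}}/G_{p,\orbify{X}}$ from Theorem~\ref{PreimageThm} together with the behavior of $\starfunc{f}\vert_{\partial\orbify{X}}=\textup{Id}$: the latter forces $\Theta_{f,p}$ to be the identity on $\Gamma_{p,\partial\orbify{X}}$, which pins down how $\Gamma_{p,\orbify{X}}$ acts on $K_p=\ker d\tilde f_p$. Once $\partial\orbify{S}$ is confirmed to be a single boundary point, the parity contradiction with the 1-orbifold classification completes the proof. An alternative, perhaps slicker, route avoiding the folding discussion entirely is to observe that $p$ has no preimage in $\textup{int}\,\orbify{X}$ lying on $\partial\orbify{S}$ (boundary points of $\orbify{S}$ lie in $\partial\orbify{X}$), so $\orbify{S}$ is a compact 1-orbifold with $\#\partial\orbify{S}$ odd, contradicting the classification regardless of the orbifold structure at $p$; I would present this as the primary argument and relegate the isotropy computation to a remark.
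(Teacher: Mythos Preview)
Your parity argument has a genuine gap: it rests on the claim that every compact connected 1-orbifold with boundary has an even number of boundary points, and this is false. In the classification of Section~\ref{CompactOneOrbifolds}, the type (c) orbifold --- the interval $[0,1]$ with $\Z_2$ isotropy at $0$ --- has exactly \emph{one} orbifold boundary point, namely $1$. The $\Z_2$ endpoint $0$ is \emph{not} an orbifold boundary point: its local model is $\R/\Z_2$, not $\R_+$. (This is precisely why Example~\ref{1-OrbifoldRetraction} works: a type (c) orbifold does retract onto its single boundary point.) So $\partial\orbify{S}=\{p\}$ being a singleton is not a contradiction; it is entirely consistent with the component $\orbify{S}_0$ through $p$ being of type (c). Your ``slicker route'' at the end commits the same error.

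What the paper actually does is accept that a type (c) component must occur, and then analyze its $\Z_2$ endpoint $z$, which lies in $\textup{int}\,\orbify{X}$. Choosing $p$ nonsingular in $\partial\orbify{X}$ forces $\Gamma_{p}=\{e\}$, hence $N_z=\ker\Theta_{f,z}=\Gamma_{z,\orbify{X}}$; the machinery of Proposition~\ref{NxFaithful} and Lemma~\ref{RestrictionLemma} (as in Example~\ref{SingularSetPostiveDim}) then gives $\Gamma_{z,\orbify{X}}\cong\Z_2$ together with a $\Z_2$-invariant splitting $T_{\tilde z}\tilde U_z=\R^{n-1}\oplus\R$ on which $\Z_2$ fixes the $\R^{n-1}$ factor. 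That forces $z$ to lie on a codimension-1 singular stratum of $\textup{int}\,\orbify{X}$, contradicting the hypothesis. In short, the codimension-1 assumption is used not to control what happens at $p\in\partial\orbify{X}$, but to rule out the interior $\Z_2$ point of the type (c) component --- a step your proposal never reaches.
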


\begin{proof} Suppose such $\starfunc{f}$ exists. By Sard's theorem there exists a regular value $p\in\partial\orbify{X}$. Furthermore, since the singular set of an orbifold is nowhere dense, we may further assume that $p$ is not in the singular set of $\partial\orbify{X}$. Therefore, by theorem~\ref{PreimageThmBdy}, $f^{-1}(p)=\orbify{S}$ is a full, smooth 1-orbifold with boundary and $\partial\orbify{S}=\orbify{S}\cap\orbify\partial\orbify{X}=\{p\}$ since $\starfunc{f}\vert_{\partial\orbify{X}}=\textup{Id}$. Because $\partial\orbify{S}$ consists of a single point, there must be a connected component 
$\orbify{S}_{\textup{c}}$ of $\orbify{S}$ isomorphic to a compact 1-orbifold of type (c). Consider the unique point $z\in\orbify{S}_{\textup{c}}\cap\textup{int}\,\orbify{X}$ where $\Gamma_{z,\orbify{S}_{\textup{c}}}=\mathbb{Z}_2$. Arguing as in example~\ref{SingularSetPostiveDim}, we can conclude that $\Gamma_{z,\orbify{X}}=\Gamma_{z,\orbify{S}_{\textup{c}}}=\mathbb{Z}_2$ and that we have a $\mathbb{Z}_2$-invariant decomposition of the tangent space $T_{\tilde z}\tilde U_z=\R^{n-1}\oplus\R$ which leaves the $\R^{n-1}$ factor fixed. This implies that the dimension of the singular stratum containing $z$ has codimension 1. By assumption, no such points $z\in\orbify{X}$ exist and we have our desired contradiction.
\end{proof}

\begin{corollary}[No Retraction Theorem for Orbifolds]\label{NoRetractionThmOrb} Let $\orbify{X}$ be a smooth compact orbifold with boundary $\partial\orbify{X}$. Assume the singular set of $\orbify{X}$ has  codimension greater than 1. Then $\partial\orbify{X}$ is not a smooth orbifold retract of $\orbify{X}$.
\end{corollary}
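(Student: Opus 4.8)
The plan is to obtain this as an immediate consequence of Theorem~\ref{NoRetractionToBdy}. First I would unwind the definition of orbifold retract: a smooth orbifold retraction of $\orbify{X}$ onto $\partial\orbify{X}$ is by definition a smooth (complete) orbifold map $\starfunc{f}:\orbify{X}\to\partial\orbify{X}$ whose restriction to the boundary is the identity, $\starfunc{f}\vert_{\partial\orbify{X}}=\textup{Id}$ (equivalently, $\starfunc{f}\circ\iota=\textup{Id}_{\partial\orbify{X}}$ for the inclusion $\iota:\partial\orbify{X}\hookrightarrow\orbify{X}$). So it suffices to show that no such $\starfunc{f}$ exists.

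Next I would check that the hypothesis of the corollary implies the hypothesis of Theorem~\ref{NoRetractionToBdy}. If the singular set of $\orbify{X}$ has codimension greater than $1$ in $\orbify{X}$, then every connected singular stratum of $\orbify{X}$ has codimension at least $2$; in particular, every singular stratum meeting $\textup{int}\,\orbify{X}$ has codimension at least $2$, so $\textup{int}\,\orbify{X}$ contains no codimension $1$ singular strata. Applying Theorem~\ref{NoRetractionToBdy} then yields the nonexistence of $\starfunc{f}$, which is precisely the assertion that $\partial\orbify{X}$ is not a smooth orbifold retract of $\orbify{X}$. By the remarks following Theorems~\ref{PreimageThm} and~\ref{PreimageThmBdy}, the argument is insensitive to which of the four notions of orbifold map is used to define "retract."

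I expect essentially no obstacle here beyond bookkeeping: all the geometric content---the Sard/preimage argument producing a type-(c) arc inside $f^{-1}(p)$, and the $\mathbb{Z}_2$-invariant splitting $T_{\tilde z}\tilde U_z=\R^{n-1}\oplus\R$ with the $\R^{n-1}$ factor fixed, which forces a codimension $1$ singular stratum through $z$---is already contained in the proof of Theorem~\ref{NoRetractionToBdy}. The only point that deserves an explicit sentence is the (trivial) observation that the codimension condition on the singular set of $\orbify{X}$ descends to the interior strata, so the corollary's blanket hypothesis is in fact stronger than what Theorem~\ref{NoRetractionToBdy} literally requires; one could therefore even state the corollary under the weaker assumption that only the interior is free of codimension $1$ singular strata.
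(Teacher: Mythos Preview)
Your proposal is correct and matches the paper's approach: the corollary is stated without proof precisely because it is an immediate consequence of Theorem~\ref{NoRetractionToBdy}, and your reduction---unwinding the definition of retract and observing that codimension greater than $1$ for the singular set of $\orbify{X}$ rules out codimension $1$ strata in $\textup{int}\,\orbify{X}$---is exactly the intended one-line deduction. Your closing remark that the corollary's hypothesis is strictly stronger than what Theorem~\ref{NoRetractionToBdy} requires is also accurate.
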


\begin{remark} Orbifolds can be regarded as rational homology manifolds and corollary~\ref{NoRetractionThmOrb} provides a nice subclass of such rational homology manifolds for which a Borsuk no retraction result holds.
\end{remark}

In light of example~\ref{1-OrbifoldRetraction}, one might suspect that the existence of codimension~1 strata is enough to guarantee a retraction to the boundary. The following two examples show that this is not the case.

\begin{example}\label{PairOfPants}[A Pair of Pants with Mirror]
\end{example}

\begin{figure}[ht]
  \centering
  \includegraphics[width=120pt]{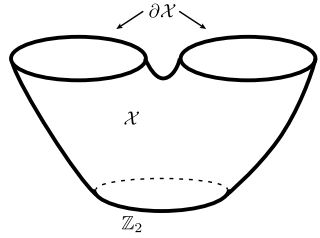}
  \caption{An orbifold $\orbify{X}$ with only codimension 1 strata that does not retract to $\partial\orbify{X}$}
  \label{Codim1StrataNotRetractableEx}
\end{figure}

\begin{example}\label{KnotComplement}[A Knot Complement] Consider the closed 3-ball $D^3$ and let $K$ denote any embedded tubular neighborhood of a knot in the interior of $D^3$. The boundary of $D^3-K$ is the disjoint union $S^2\coprod T^2$ of a 2-sphere and 2-torus. Consider the 3-orbifold with boundary $\orbify{X}$ whose underlying topological space is $D^3-K$ where we consider the $S^2$ (topological) boundary component as a $\mathbb{Z}_2$ mirror. Thus, as an orbifold $\partial\orbify{X}=T^2$. Because $H_1(\partial\orbify{X},\mathbb{Z})\cong\mathbb{Z}^2$ and $H_1(\orbify{X},\mathbb{Z})\cong\mathbb{Z}$ since \orbify{X} is a knot complement, there can be no retraction $r:\orbify{X}\to\partial\orbify{X}$ by elementary homology considerations.
\end{example}

Elaborating on the ideas in the proof of theorem~\ref{NoRetractionToBdy}, we can give hypotheses that guarantee that the preimage of a regular value is, in fact, a \emph{1-manifold} (an orbifold with trivial orbifold structure).

\begin{theorem} Let $\orbify{X}$ be a smooth $n$-dimensional orbifold with boundary and $\orbify{P}$ a smooth orbifold with $\dim\orbify{P}=n-1$. Suppose that $p\in\orbify{P}$ is a regular value for a smooth (complete) orbifold map $\starfunc{f}:\orbify{X}\to\orbify{P}$. This will happen, for example, if $\starfunc{f}$ is surjective. Let $\orbify{S}=f^{-1}(p)$. Suppose further that for $x\in\orbify{S}$, $\Gamma_{x,\orbify{X}}$ has no index 2 subgroups acting on $\R^n$ as $\R^{n-1}\oplus\R$ with trivial action on the $\R$ factor. Then $\orbify{S}$ is a compact 1-manifold with an even number of boundary points.
\end{theorem}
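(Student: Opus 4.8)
The plan is to use the Preimage Theorem for orbifolds with boundary (Theorem~\ref{PreimageThmBdy}) to realize $\orbify{S}$ as a compact, full $1$-suborbifold with boundary, then to apply the classification of compact connected $1$-orbifolds from section~\ref{CompactOneOrbifolds} to reduce the whole statement to the single assertion that no point of $\orbify{S}$ has nontrivial intrinsic isotropy. The index-$2$ hypothesis will be precisely what forbids such a point.

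First I would observe that Theorem~\ref{PreimageThmBdy} applies to $p$ (its hypotheses, as usual, also ask that $p$ be a regular value for $f\vert_{\partial\orbify{X}}$, a condition one arranges by Sard's theorem and which is vacuous when $\partial\orbify{X}=\emptyset$), so $\orbify{S}=f^{-1}(p)$ is a full, smooth $1$-suborbifold with boundary $\partial\orbify{S}=\orbify{S}\cap\partial\orbify{X}$. Since $\orbify{S}$ is closed in the compact orbifold $\orbify{X}$ it is compact, hence by section~\ref{CompactOneOrbifolds} a finite disjoint union of $1$-orbifolds of types (a)--(d). A component is an honest manifold precisely when it is of type (a) or (b); types (c) and (d) are exactly the components carrying a point with $\Z_2$ intrinsic isotropy, and such a point is always an interior point of the defining interval, hence lies in $\intr\orbify{S}=\orbify{S}\cap\intr\orbify{X}$ (using $\partial\orbify{S}=\orbify{S}\cap\partial\orbify{X}$). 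So it suffices to show that no point of $\orbify{S}$ has $\Z_2$ intrinsic isotropy.

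Suppose, for contradiction, that $z\in\orbify{S}$ has $\Gamma_{z,\orbify{S}}=\Z_2$; by the above $z\in\intr\orbify{X}$, so near $z$ we may use an ordinary orbifold chart $(\tilde U_z,\Gamma_{z,\orbify{X}})$ with $\tilde U_z\cong\R^n$ and invoke Theorem~\ref{PreimageThm}. Put $K_z=\textup{ker}(d\tilde f_z(\tilde z))$; regularity of $p$ forces $\dim K_z=\dim\orbify{X}-\dim\orbify{P}=1$, and Theorem~\ref{PreimageThm} gives $\Gamma_{z,\orbify{S}}=\Gamma_{z,\orbify{X}}/G_{z,\orbify{X}}$ with $G_{z,\orbify{X}}=\{\gamma\in\Gamma_{z,\orbify{X}}\mid d\gamma\vert_{K_z}=\textup{Id}\}$, so $G_{z,\orbify{X}}$ has index $2$ in $\Gamma_{z,\orbify{X}}$. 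As in the proof of Theorem~\ref{PreimageThm}, equivariance of $\tilde f_z$ makes $\tilde S_z=\tilde f_z^{-1}(\tilde p)$ a $\Gamma_{z,\orbify{X}}$-invariant submanifold, so $K_z=T_{\tilde z}\tilde S_z$ is a $\Gamma_{z,\orbify{X}}$-invariant line; by the Bochner--Cartan theorem \cites{MR0073104,MR2523149} the $\Gamma_{z,\orbify{X}}$-action on $\tilde U_z\cong\R^n$ is linear, and averaging an inner product yields a $\Gamma_{z,\orbify{X}}$-invariant complement $W\cong\R^{n-1}$ of $K_z$. Then $\Gamma_{z,\orbify{X}}$ acts on $\R^n\cong W\oplus K_z=\R^{n-1}\oplus\R$, and its index-$2$ subgroup $G_{z,\orbify{X}}$ acts trivially on the $K_z=\R$ factor by the very definition of $G_{z,\orbify{X}}$ --- contradicting the hypothesis.

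Hence every component of $\orbify{S}$ is of type (a) or (b), so $\orbify{S}$ is a compact $1$-manifold with boundary, i.e.\ a finite disjoint union of circles and closed intervals; it therefore has an even number of boundary points, which by Theorem~\ref{PreimageThmBdy} are precisely the points of $\orbify{S}\cap\partial\orbify{X}$. I expect the main obstacle to be the third paragraph: translating the geometry of the preimage at a mirror point of $\orbify{S}$ into exactly the group-theoretic configuration named in the hypothesis --- identifying $G_{z,\orbify{X}}$ as the relevant index-$2$ subgroup and producing the $\Gamma_{z,\orbify{X}}$-invariant splitting $\R^n\cong\R^{n-1}\oplus\R$ on which it acts trivially in the line direction. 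The remaining steps are routine applications of the two preimage theorems together with the classification of compact $1$-orbifolds.
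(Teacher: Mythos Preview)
Your proof is correct and follows essentially the same route as the paper: apply Theorem~\ref{PreimageThmBdy} to obtain a compact $1$-orbifold, use the classification from section~\ref{CompactOneOrbifolds} to reduce to excluding $\Z_2$ isotropy points, and at such a hypothetical point $z$ identify $G_{z,\orbify{X}}=\ker(\Gamma_{z,\orbify{X}}\to\Gamma_{z,\orbify{S}})$ as the forbidden index-$2$ subgroup acting trivially on the line $K_z\cong\R$. You supply somewhat more detail than the paper does (the Bochner--Cartan linearization, the $\Gamma_{z,\orbify{X}}$-invariant complement via averaging, and the observation that such $z$ must lie in $\intr\orbify{X}$), but the argument is the same.
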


\begin{proof} As before, by theorem~\ref{PreimageThmBdy}, $\orbify{S}$ is a compact 1-orbifold and thus is a disjoint union of 1-orbifolds of type (a)-(d). The goal is to show that cases (c) and (d) do not occur. To this end, suppose a component \orbify{C} of $\orbify{S}$ is of type (c) or (d) and choose one of the points $z\in\orbify{C}$ where $\Gamma_{z,\orbify{C}}=\mathbb{Z}_2$. At this point, the kernel $G_{z,\orbify{X}}$ of the quotient homomorphism $\Gamma_{z,\orbify{X}}\to\Gamma_{z,\orbify{C}}$ has index 2 and acts on $\R^n=\R^{n-1}\oplus\R$ trivially on the $\R$ factor. 
By assumption, no such points $z\in\orbify{X}$ exist and we have our desired contradiction.
We conclude, therefore, that $\orbify{S}$ is a compact 1-manifold with an even number of boundary points.
\end{proof}

\begin{bibdiv} 
\begin{biblist}

\bibselect{ref}
 
\end{biblist} 
\end{bibdiv}

\end{document}